\newtheorem*{WDF}{Weyl Dimension Formula}
\newtheorem*{thmn}{Theorem (Gross and Wallach)}
\newtheorem*{mthm}{Main Theorem}
\newtheorem*{thm}{Theorem}
\newtheorem*{lem}{Lemma}
\numberwithin{equation}{section}
\begin{document}
\title{A multi-variate generating function \\ for the Weyl Dimension Formula}
\author{Wayne Johnson}
\address{Department of Mathematics, University of Wisconsin--Milwaukee}
\email{waj@uwm.edu}

\begin{abstract}
We present a closed form for a multi-variate generating function for the dimensions of the irreducible representations of a semisimple, simply connected linear algebraic group over $\mathbb{C}$ whose highest weights lie in a finitely generated lattice cone in the dominant chamber. This result generalizes the formula for the Hilbert series of an equivariant embedding of a homogeneous projective variety. As a special case, we show how the multi-variate series can be used to compute the Hilbert series of the determinantal varieties.
\end{abstract}

\maketitle

\section{Introduction}

Let $G$ be a semisimple, simply connected linear algebraic group over $\mathbb{C}$, and fix a choice $T\subset B\subset G$ of maximal torus and Borel subgroup. The choice of Borel gives us a set of positive roots $\Phi^+$ for $\mathfrak{g}:=Lie(G)$, and a set $P_+(\mathfrak{g})$ of dominant integral weights for $\mathfrak{g}$. To each $\lambda\in P_+(\mathfrak{g})$, the Theorem of the Highest Weight gives us a finite dimensional irreducible representation $L(\lambda)$ of $G$. Using this representation, we can find a parabolic subgroup $P\supset B$--namely, $P$ is the subgroup of $G$ that stabilizes the unique hyperplane $H$ in $L(\lambda)$ fixed by $B$. Then we get an embedding of $G/P$ into the projective space of all hyperplanes in $L(\lambda)$, denoted $\mathbb{P}(L(\lambda))$, given by $\pi_{\lambda}(gP):=g(H)$.
\\
\indent In the paper ``On the Hilbert polynomials and Hilbert series of homogeneous projective varieties" \cite{MR2906911}, the authors present a closed form for the Hilbert series of the equivariant embedding $\pi_{\lambda}$ of $G/P$ into $\mathbb{P}(L(\lambda))$. In particular, they show that the homogeneous coordinate ring $A(G/P)$ of such an embedding is isomorphic to $\displaystyle\bigoplus_{n\in\mathbb{N}} L(n\lambda)$, where $L(\lambda)$ denotes the above irreducible representation of $G$ with highest weight $\lambda$. The Hilbert series of the embedding is then given by the function
\begin{center}
$HS_q(\lambda)=\displaystyle\sum_{n\in\mathbb{N}}dim(L(n\lambda))q^n$.
\end{center}
They then prove the Hilbert series has the following closed form.

\begin{thmn}
The Hilbert series of the embedding $\pi_{\lambda}$ of $G/P$ is
\begin{center}
$\displaystyle\prod_{\alpha\in\Phi^+}\left(\frac{(\lambda,\alpha)}{(\rho,\alpha)}q\frac{d}{dq}+1\right)\frac{1}{1-q}$.
\end{center}
\end{thmn}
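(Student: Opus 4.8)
The plan is to reduce the identity to a termwise computation on the geometric series, combining the Weyl Dimension Formula with the elementary fact that the Euler operator $q\frac{d}{dq}$ acts diagonally on the monomials $q^n$. First I would invoke the Weyl Dimension Formula, which expresses
\[
\dim L(\mu)=\prod_{\alpha\in\Phi^+}\frac{(\mu+\rho,\alpha)}{(\rho,\alpha)}.
\]
Specializing to $\mu=n\lambda$ and using linearity of the inner product in its first argument, each factor becomes $\frac{(n\lambda+\rho,\alpha)}{(\rho,\alpha)}=n\frac{(\lambda,\alpha)}{(\rho,\alpha)}+1$, so that
\[
\dim L(n\lambda)=\prod_{\alpha\in\Phi^+}\left(n\frac{(\lambda,\alpha)}{(\rho,\alpha)}+1\right).
\]

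Next I would analyze the operator side. Writing $\theta:=q\frac{d}{dq}$ for the Euler operator, one has $\theta\, q^n=n\,q^n$, so each factor $\frac{(\lambda,\alpha)}{(\rho,\alpha)}\theta+1$ acts on $q^n$ by the scalar $n\frac{(\lambda,\alpha)}{(\rho,\alpha)}+1$. Since all of these factors are polynomials in the single operator $\theta$, they commute, and hence their product acts on $q^n$ by the product of the corresponding scalars; by the previous step this product is precisely $\dim L(n\lambda)$.

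Finally I would apply this product of commuting operators to the geometric series $\frac{1}{1-q}=\sum_{n\geq 0}q^n$. Interchanging the operator with the summation, the right-hand side of the theorem becomes $\sum_{n\geq 0}\dim L(n\lambda)\,q^n$, which is exactly $HS_q(\lambda)$, establishing the claimed closed form.

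The computation itself is short, and I do not expect a genuine obstacle; the only points requiring care are the legitimacy of differentiating $\frac{1}{1-q}$ term by term — cleanest to phrase the whole identity in the ring of formal power series in $q$, where $\theta$ is a well-defined degree-preserving operator, so that no analytic convergence argument is needed — and the observation that the factors commute precisely because each is a function of the single operator $\theta$, which is what allows the product to act as the product of eigenvalues on every $q^n$.
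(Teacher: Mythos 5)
Your proof is correct and follows essentially the same route as the paper's argument (given there for the multivariate generalization and specialized to $k=1$): apply the Weyl Dimension Formula to write $\dim L(n\lambda)=\prod_{\alpha\in\Phi^+}\bigl(n\,c_\lambda(\alpha)+1\bigr)$ and then realize the multiplication by $n$ via the Euler operator $q\frac{d}{dq}$ acting on the geometric series. The only difference is presentational: you use the eigenvalue identity $q\frac{d}{dq}\,q^n=n\,q^n$ directly, which neatly absorbs the paper's intermediate step of expanding the product as a polynomial $\sum_i b_i n^i$ and reassembling the operator afterwards.
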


A natural generalization of the above Hilbert series is the formal power series
\begin{equation}
HS_{\textbf{q}}\langle\lambda_1,\dots,\lambda_k\rangle:=\displaystyle\sum_{(a_1,\dots,a_k)\in\mathbb{N}^k}dim(L(a_1\lambda_1+\dots+a_k\lambda_k))q_1^{a_1}\dots q_k^{a_k},
\end{equation}
where $q_1,\dots,q_k$ are indeterminates, and $\lambda_1,\dots,\lambda_k$ are dominant integral weights. The main result of this paper is to prove a generalization of the above theorem and find a closed form of (1.1). We prove the following.

\begin{mthm}
Let $\lambda_1,\dots,\lambda_k$ be dominant integral weights. Then
\begin{equation}
HS_{\textbf{q}}\langle\lambda_1,\dots,\lambda_k\rangle=\displaystyle\prod_{\alpha\in\Phi^+}\left(1+c_{\lambda_1}(\alpha)q_1\frac{\partial}{\partial q_1}+\dots+c_{\lambda_k}(\alpha)q_k\frac{\partial}{\partial q_k}\right)\prod_{i=1}^k\frac{1}{1-q_i},
\end{equation}
where $c_{\lambda}(\alpha):=\displaystyle\frac{(\lambda,\alpha)}{(\rho,\alpha)}$.
\end{mthm}

This is a generating function for the dimensions of the finite dimensional irreducible representations of $G$ whose highest weight lies in the lattice cone in $P_+(\mathfrak{g})$ generated by $\lambda_1,\dots,\lambda_k$. We denote such a lattice cone by $\langle\lambda_1,\dots,\lambda_k\rangle$. Note that in \cite{MR1120029}, the authors give the special case where $\mathfrak{g}$ has rank $k$ and we choose $\lambda_i$ to be the fundamental dominant weight $\omega_i$ for $1\leq i\leq k$. The above theorem applies to a more general lattice cone.
\\
\indent Note that (1.2) allows us to compute the Hilbert series for many varieties by first computing the multi-variate series and then specializing to a gradation on the algebra
\begin{center}
$\displaystyle\bigoplus_{\lambda\in\langle\lambda_1,\dots,\lambda_k\rangle}L(\lambda)$,
\end{center}
via a suitable substitution. This is especially useful in computing the Hilbert series of determinantal varieties, which are traditionally quite difficult to compute (see, for example, \cite{MR2037715}). The series (1.2) is not difficult to compute using Mathematica or Maple, and then we find the Hilbert series by specializing the grade appropriately. For instance, in \S 4, we give a linear recursion on $n$ for computing (1.2) for the weights in $\langle2\omega_1,2\omega_2\rangle$, where $\omega_1$ and $\omega_2$ are the first two fundamental dominant weights of $SL(n,\mathbb{C})$, and then specialize this two variable series to obtain the Hilbert series of the determinantal variety of rank at most two symmetric matrices in $M_n(\mathbb{C})$. The methods presented in this paper bypass much of the complicated machinery traditionally used to compute these series.

\section{Preliminaries}

Throughout this paper, let $G$ be a semisimple, simply connected linear algebraic group over $\mathbb{C}$. Let $T$ be a maximal torus and $T\subset B\subset G$ a choice of Borel subgroup containing $T$. Let $U$ be the unipotent radical of $B$. We denote by $\mathfrak{g},\mathfrak{h}$, and $\mathfrak{b}$ the Lie algebras of $G,T$, and $B$, respectively. Let $\Phi$ be the root system given by the pair $(\mathfrak{g},\mathfrak{h})$, and let $\Phi^+$ denote the set of positive roots corresponding to $\mathfrak{b}$. Throughout this paper, we set $d:=|\Phi^+|$.\\
\indent Let $P_+(\mathfrak{g})$ denote the set of dominant integral weights. To each weight $\lambda\in P_+(\mathfrak{g})$, let $L(\lambda)$ denote the irreducible representation of $G$ with highest weight $\lambda$, and denote by $(,)$ the non-degenerate bilinear form on $\mathfrak{h}^*$ induced by the Killing form. Then the following is well known (see, for example, p.336 in \cite{MR2522486}).
\begin{WDF}
Let $\lambda\in P_+(\mathfrak{g})$. Then
\begin{center}
$dim(L(\lambda))=\displaystyle\prod_{\alpha\in\Phi^+}\frac{(\lambda+\rho,\alpha)}{(\rho,\alpha)}$,
\end{center}
where $\rho$ denotes $\displaystyle\frac{1}{2}\sum_{\alpha\in\Phi^+}\alpha$.
\end{WDF}
\noindent Following the notation in \cite{MR2906911}, let $c_{\lambda}(\alpha):=\displaystyle\frac{(\lambda,\alpha)}{(\rho,\alpha)}$. Then the above formula can be written as $dim(L(\lambda))=\displaystyle\prod_{\alpha\in\Phi^+}(c_{\lambda}(\alpha)+1)$.\\
\indent Given a graded $\mathbb{C}$-algebra $A$ with $i$th homogeneous component $A_i$, we define its \emph{Hilbert function} to be the map $HF_A:\mathbb{N}\rightarrow\mathbb{N}$, given by $HF_A(i)=dim(A_i)$. Then the \emph{Hilbert series} of $A$ is the formal power series
\begin{center}
$HS_q(A):=\displaystyle\sum_{n\in\mathbb{N}}HF_A(n)q^n$.
\end{center}
We give some basic properties of the Hilbert function and series for a graded $\mathbb{C}$-algebra $A$. For further information, see, for example, \cite{MR1669884},\cite{MR1416564}. If $A$ is generated by $A_1$, then the Hilbert series of $A$ must represent a rational function of the form
\begin{center}
$\displaystyle\frac{p(q)}{(1-q)^d}$,
\end{center}
where $p(q)\in\mathbb{Z}[q]$ is a polynomial in $q$ with integer coefficients. Further, if we consider the variety given by the spectrum of $A$, then the dimension of this variety is $d$.
\\
\indent As a generalization of the above, if we have an $\mathbb{N}^k$-graded $\mathbb{C}$-algebra $A$ with homogeneous component $A_{(a_1,\dots,a_k)}$ corresponding to the element $(a_1,\dots,a_k)\in\mathbb{N}^k$, we can define its \emph{$\mathbb{N}^k$-graded Hilbert series} as the formal powers series
\begin{center}
$\displaystyle\sum_{(a_1,\dots,a_k)\in\mathbb{N}^k}dim(A_{(a_1,\dots,a_k)})q_1^{a_1}\dots q_k^{a_k}$.
\end{center}
This series can be restricted via a substitution to a single grading on $A$. For example, we could make the substitution $q_i\mapsto q$ to get a Hilbert series for $A$. Note that different restrictions correspond to different gradations of $A$, and these  may give different Hilbert series.
\\
\indent In \cite{MR2906911}, the authors are interested in computing the Hilbert series of the homogeneous coordinate ring of an equivariant embedding of $G/P$ into a projective space. We recall the details. Given any irreducible highest weight representation $L(\lambda)$ of $G$, we can consider the parabolic subgroup given by the stabilizer of the unique hyperplane $H$ in $L(\lambda)$ fixed by the Borel subgroup $B$. If we denote by $\mathbb{P}(L(\lambda))$ the projective space of all hyperplanes in $L(\lambda)$, then we have an embedding
\begin{center}
$\pi_{\lambda}:G/P\rightarrow\mathbb{P}(L(\lambda))$,
\end{center}
given by the formula $\pi_{\lambda}(gP):=g(H)$. Then it is a consequence of the Borel-Weil theorem that the homogeneous coordinate ring $A_{\lambda}(G/P)$ is a sum of highest weight representations. Namely,
\begin{center}
$A_{\lambda}(G/P)=\displaystyle\bigoplus_{n\in\mathbb{N}}L(n\lambda)$.
\end{center}
Thus, the Hilbert series of the embedding is $\displaystyle\sum_{n\in\mathbb{N}}dim(L(n\lambda))q^n$. The authors then prove that this series has the following closed form.
\begin{thmn}
The Hilbert series of the embedding $\pi_{\lambda}$ of $G/P$ is
\begin{center}
$\displaystyle\prod_{\alpha\in\Phi^+}\left(c_{\lambda}(\alpha)q\frac{d}{dq}+1\right)\frac{1}{1-q}$.
\end{center}
\end{thmn}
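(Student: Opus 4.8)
The plan is to deduce the closed form directly from the Weyl Dimension Formula, using the fact that the Euler operator $q\frac{d}{dq}$ is diagonalized by the monomial basis of $\mathbb{C}[[q]]$. First I would rewrite the dimension as a polynomial value in $n$ indexed by the positive roots. Since the form $(,)$ is bilinear, $c_{n\lambda}(\alpha)=\frac{(n\lambda,\alpha)}{(\rho,\alpha)}=n\,c_{\lambda}(\alpha)$, so the Weyl Dimension Formula (in the product form $\dim(L(\mu))=\prod_{\alpha\in\Phi^+}(c_{\mu}(\alpha)+1)$ recorded in the Preliminaries) gives
\[
\dim(L(n\lambda))=\prod_{\alpha\in\Phi^+}\bigl(c_{n\lambda}(\alpha)+1\bigr)=\prod_{\alpha\in\Phi^+}\bigl(n\,c_{\lambda}(\alpha)+1\bigr).
\]
This already exhibits each dimension in exactly the shape of the differential operator appearing on the right-hand side of the claim.

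Next I would make the key eigenvalue observation: $q\frac{d}{dq}\,q^{n}=n\,q^{n}$, so every monomial $q^{n}$ is an eigenvector of $q\frac{d}{dq}$ with eigenvalue $n$. Consequently each factor $c_{\lambda}(\alpha)q\frac{d}{dq}+1$ acts on $q^{n}$ by the scalar $n\,c_{\lambda}(\alpha)+1$. Because all of these factors are polynomials in the single operator $q\frac{d}{dq}$, they commute with one another, so the product $\prod_{\alpha\in\Phi^+}\bigl(c_{\lambda}(\alpha)q\frac{d}{dq}+1\bigr)$ is unambiguous and is again diagonal on monomials:
\[
\prod_{\alpha\in\Phi^+}\left(c_{\lambda}(\alpha)q\frac{d}{dq}+1\right)q^{n}=\left(\prod_{\alpha\in\Phi^+}\bigl(n\,c_{\lambda}(\alpha)+1\bigr)\right)q^{n}=\dim(L(n\lambda))\,q^{n},
\]
where the last equality is exactly the reformulated Weyl Dimension Formula from the first step.

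Finally I would apply this operator to $\frac{1}{1-q}=\sum_{n\geq 0}q^{n}$ term by term and sum the resulting coefficients, obtaining
\[
\prod_{\alpha\in\Phi^+}\left(c_{\lambda}(\alpha)q\frac{d}{dq}+1\right)\frac{1}{1-q}=\sum_{n\geq 0}\dim(L(n\lambda))\,q^{n}=HS_q(\lambda),
\]
which is the asserted closed form. The algebra is essentially immediate once the eigenvalue observation is in place; the only point that genuinely demands care is the term-by-term action on the formal series. I expect this to be the main obstacle: one must check that the operator respects the grading of $\mathbb{C}[[q]]$ by degree in $q$ (which it does, since $q\frac{d}{dq}$ is degree-preserving), so that it is continuous for the $(q)$-adic topology and hence commutes with the infinite sum $\sum_{n\geq 0}q^{n}$. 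With that justification recorded, the identity follows.
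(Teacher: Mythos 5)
Your proof is correct and follows essentially the same route as the paper's argument (the paper proves the general multi-variate version and obtains this statement as the case $k=1$): both rest on rewriting $\dim(L(n\lambda))=\prod_{\alpha\in\Phi^+}(n\,c_{\lambda}(\alpha)+1)$ via the Weyl Dimension Formula and then realizing multiplication of the $n$th coefficient by a polynomial in $n$ as a polynomial in the operator $q\frac{d}{dq}$ applied to $\frac{1}{1-q}$. Your eigenvalue packaging merely replaces the paper's explicit expansion of the product into monomials $\sum b_i n^i$ with the equivalent observation that the operator is diagonal on the monomial basis; this is a difference of presentation, not of substance.
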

We want to extend this to a multi-variate series graded over finitely many dominant integral weights. To this end, we use the notation $\textbf{a}$ for a $k$-tuple $(a_1,\dots,a_k)\in\mathbb{N}^k$. We use the convention that $\textbf{a}^{\textbf{i}}:=a_1^{i_1}\dots a_k^{i_k}$ for two $k$-tuples $\textbf{a}$ and $\textbf{i}$. We denote by $|\textbf{i}|$ the sum of the indices $i_1+\dots+i_k$. Given partial derivatives $\frac{\partial}{\partial q_i}$ and a $k$-tuple $\textbf{i}$, we use the notation $\displaystyle\left(\frac{\partial}{\partial\textbf{q}}\right)^{\textbf{i}}$ for the product $\displaystyle\left(\frac{\partial}{\partial q_1}\right)^{i_1}\dots\left(\frac{\partial}{\partial q_k}\right)^{i_k}$.
\\
\indent We use the notation $\langle\lambda_1,\dots,\lambda_k\rangle$ to denote the lattice cone in the dominant chamber generated by the dominant integral weights $\lambda_1,\dots,\lambda_k$, and consider the following series.
\begin{center}
$HS_{\textbf{q}}\langle\lambda_1,\dots,\lambda_k\rangle:=\displaystyle\sum_{\textbf{a}\in\mathbb{N}^k}dim(L(a_1\lambda_1+\dots+a_k \lambda_k))\textbf{q}^{\textbf{a}}$
\end{center}
We prove the following.
\begin{mthm}
Let $\lambda_1,\dots,\lambda_k$ be dominant integral weights. Then
\begin{equation}
HS_{\textbf{q}}\langle\lambda_1,\dots,\lambda_k\rangle=\displaystyle\prod_{\alpha\in\Phi^+}\left(1+c_{\lambda_1}(\alpha)q_1\frac{\partial}{\partial q_1}+\dots+c_{\lambda_k}(\alpha)q_k\frac{\partial}{\partial q_k}\right)\prod_{i=1}^k\frac{1}{1-q_i},
\end{equation}
where $c_{\lambda}(\alpha):=\displaystyle\frac{(\lambda,\alpha)}{(\rho,\alpha)}$
\end{mthm}
We will use the above formula to compute the Hilbert series for certain determinantal varieties. To this end, we define the \emph{determinantal variety of rank k} to be the subset of all rank at most $k$ matrices in $M_{m,n}(\mathbb{C})$. The \emph{symmetric determinantal variety of rank $k$} is the subset of rank at most $k$ matrices in $Sym_n(\mathbb{C}):=\{X\in M_n(\mathbb{C})\mid X-X^T=0\}$. The \emph{anti-symmetric determinantal variety of rank 2$k$} is the subset of rank at most 2$k$ matrices in $ASym_n(\mathbb{C}:=\{X\in M_{2n}(\mathbb{C})\mid X+X^T=0\}$. We denote these three varieties as $\mathcal{D}^{\leq k}_{m,n}, \mathcal{SD}^{\leq k}_n$, and $\mathcal{AD}^{\leq 2k}_n$, respectively.

\section{Proof of the main theorem}
\begin{thm}
Let $\lambda_1,\dots,\lambda_k$ be dominant integral weights. Then
\begin{center}
$HS_{\textbf{q}}\langle\lambda_1,\dots,\lambda_k\rangle=\displaystyle\prod_{\alpha\in\Phi^+}\left(1+c_{\lambda_1}(\alpha)q_1\frac{\partial}{\partial q_1}+\dots+c_{\lambda_k}(\alpha)q_k\frac{\partial}{\partial q_k}\right)\prod_{i=1}^k\frac{1}{1-q_i}$,
\end{center}
where $c_{\lambda}(\alpha):=\displaystyle\frac{(\lambda,\alpha)}{(\rho,\alpha)}$.
\end{thm}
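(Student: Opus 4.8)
The plan is to reduce the identity to a simultaneous-eigenvalue computation for a commuting family of Euler operators acting on the monomial basis of $\mathbb{C}[[q_1,\dots,q_k]]$. First I would apply the Weyl Dimension Formula in the product form $dim(L(\mu))=\prod_{\alpha\in\Phi^+}(c_\mu(\alpha)+1)$ with $\mu=a_1\lambda_1+\dots+a_k\lambda_k$. Since the form $(,)$ is linear in its first slot, $c_\mu(\alpha)=(\mu,\alpha)/(\rho,\alpha)$ splits additively as $c_\mu(\alpha)=a_1 c_{\lambda_1}(\alpha)+\dots+a_k c_{\lambda_k}(\alpha)$, so that
\begin{equation*}
dim(L(a_1\lambda_1+\dots+a_k\lambda_k))=\prod_{\alpha\in\Phi^+}\left(1+a_1 c_{\lambda_1}(\alpha)+\dots+a_k c_{\lambda_k}(\alpha)\right).
\end{equation*}
This is exactly the scalar I want the operator to attach to the monomial $\textbf{q}^{\textbf{a}}$.

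Next I would record the elementary fact that $q_i\frac{\partial}{\partial q_i}$ is diagonalized by the monomials: $q_i\frac{\partial}{\partial q_i}\,\textbf{q}^{\textbf{a}}=a_i\,\textbf{q}^{\textbf{a}}$. Thus $\textbf{q}^{\textbf{a}}$ is a common eigenvector of the whole family $\{q_i\frac{\partial}{\partial q_i}\}_{i=1}^k$, and for each fixed $\alpha$ the first-order factor $1+\sum_j c_{\lambda_j}(\alpha)\,q_j\frac{\partial}{\partial q_j}$ sends $\textbf{q}^{\textbf{a}}$ to $(1+\sum_j a_j c_{\lambda_j}(\alpha))\,\textbf{q}^{\textbf{a}}$. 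Because the operators $q_i\frac{\partial}{\partial q_i}$ commute with one another, every factor in the product over $\Phi^+$ is a polynomial in this one commuting family, so the finite product is well defined independent of the ordering of the roots and continues to have $\textbf{q}^{\textbf{a}}$ as an eigenvector. Applying the full product to $\textbf{q}^{\textbf{a}}$ therefore multiplies it by $\prod_{\alpha\in\Phi^+}(1+\sum_j a_j c_{\lambda_j}(\alpha))$, which by the first step is precisely $dim(L(a_1\lambda_1+\dots+a_k\lambda_k))$.

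To assemble the generating function I would expand the seed as the geometric product $\prod_{i=1}^k\frac{1}{1-q_i}=\sum_{\textbf{a}\in\mathbb{N}^k}\textbf{q}^{\textbf{a}}$, in which every coefficient equals $1$. Since the differential operator is a finite product and each of its factors preserves the monomial grading, carrying $\textbf{q}^{\textbf{a}}$ to a scalar multiple of itself, it may be applied term by term to this formal power series with no convergence or rearrangement question arising. The coefficient of $\textbf{q}^{\textbf{a}}$ is then replaced by $dim(L(a_1\lambda_1+\dots+a_k\lambda_k))$, and summing over $\textbf{a}$ yields exactly $HS_{\textbf{q}}\langle\lambda_1,\dots,\lambda_k\rangle$, as claimed. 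Specializing to $k=1$ recovers the single-variable theorem of Gross and Wallach.

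The computation is genuinely short, so there is no serious obstacle; the only point deserving care is verifying that the product over $\Phi^+$ matches the Weyl dimension formula factor by factor. I would make this transparent by stressing that $\textbf{q}^{\textbf{a}}$ is a simultaneous eigenvector of all the commuting Euler operators, so that the entire product acts on each monomial as a single scalar, namely the product of the per-root eigenvalues, rather than producing the cross terms one might naively fear from multiplying out first-order differential operators.
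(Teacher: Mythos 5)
Your proof is correct. It rests on the same two pillars as the paper's argument --- the additivity $c_{a_1\lambda_1+\dots+a_k\lambda_k}(\alpha)=\sum_j a_j c_{\lambda_j}(\alpha)$ extracted from the Weyl Dimension Formula, and the identity $q_i\frac{\partial}{\partial q_i}\,\textbf{q}^{\textbf{a}}=a_i\,\textbf{q}^{\textbf{a}}$ --- but you organize the middle step differently. The paper expands $\prod_{\alpha\in\Phi^+}\bigl(1+\sum_j a_j c_{\lambda_j}(\alpha)\bigr)$ as a polynomial $\sum_{|\textbf{i}|\le d}b_{\textbf{i}}\textbf{a}^{\textbf{i}}$ in the $a_j$, interchanges the two summations, evaluates each $\sum_{\textbf{a}}\textbf{a}^{\textbf{i}}\textbf{q}^{\textbf{a}}$ by applying $\bigl(q_1\frac{\partial}{\partial q_1}\bigr)^{i_1}\cdots\bigl(q_k\frac{\partial}{\partial q_k}\bigr)^{i_k}$ to the geometric seed, and then reassembles the operator product from the coefficients $b_{\textbf{i}}$. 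You bypass the expansion entirely by observing that each monomial $\textbf{q}^{\textbf{a}}$ is a simultaneous eigenvector of the commuting Euler operators, so the full product of first-order factors acts on $\textbf{q}^{\textbf{a}}$ by the single scalar $\prod_{\alpha}\bigl(1+\sum_j a_j c_{\lambda_j}(\alpha)\bigr)=\dim L(a_1\lambda_1+\dots+a_k\lambda_k)$, and you then apply the operator term by term to $\prod_i(1-q_i)^{-1}=\sum_{\textbf{a}}\textbf{q}^{\textbf{a}}$. The two routes are logically equivalent, but yours is shorter and makes the absence of cross terms transparent, while the paper's version isolates the coefficients $b_{\textbf{i}}$ explicitly, which is the form one would use to read off the numerator of the resulting rational function. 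Both versions handle the formal power series issues correctly, since the operator is of finite order and diagonal in the monomial basis.
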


\begin{proof}
By the Weyl Dimension Formula, we have
\begin{equation}
HS_{\textbf{q}}\langle\lambda_1,\dots,\lambda_k\rangle=\displaystyle\sum_{\textbf{a}\in\mathbb{N}^k}\prod_{\alpha\in\Phi^+}(1+a_1c_{\lambda_1}(\alpha)+\dots+a_kc_{\lambda_k}(\alpha))\textbf{q}^{\textbf{a}}.
\end{equation},
where $\textbf{a}:=(a_1,\dots,a_k)$, and $\textbf{q}^{\textbf{a}}:=q_1^{a_1}\dots q_k^{a_k}$. Consider the product
\begin{center}
$\displaystyle\prod_{\alpha\in\Phi^+}(1+a_1c_{\lambda_1}(\alpha)+\dots+a_kc_{\lambda_k}(\alpha))$.
\end{center}
It is a polynomial in the $a_i$ for $1\leq i\leq k$. So we have
\begin{equation}
\displaystyle\prod_{\alpha\in\Phi^+}(1+a_1c_{\lambda_1}(\alpha)+\dots+a_kc_{\lambda_k}(\alpha))=\sum_{|\textbf{i}|\leq d} b_{\textbf{i}}\textbf{a}^{\textbf{i}},
\end{equation}
where $d:=|\Phi^+|$, and $|\textbf{i}|:=i_1+\dots+i_k$. The coefficients do not depend on $\textbf{a}$. Thus, (3.1) becomes
\begin{equation}
\displaystyle\sum_{|\textbf{i}|\leq d}b_{\textbf{i}}\sum_{\textbf{a}\in\mathbb{N}^k}\textbf{a}^{\textbf{i}}\textbf{q}^{\textbf{a}}.
\end{equation}
We now find a closed form for $\displaystyle\sum_{\textbf{a}\in\mathbb{N}^k}\textbf{a}^{\textbf{i}}\textbf{q}^{\textbf{a}}$. Note that if we define 
\begin{center}
$f_{(i_1,\dots,i_k)}(\textbf{q}):=\displaystyle\sum_{\textbf{a}\in\mathbb{N}^k}\textbf{a}^{\textbf{i}}\textbf{q}^{\textbf{a}}$, 
\end{center}
then hitting $f_{(i_1,\dots,i_k)}(\textbf{q})$ with the partial differential operator $q_j\displaystyle\frac{\partial}{\partial q_j}$ gives us 
\\
$f_{(i_1,\dots,i_j+1,\dots,i_k)}(\textbf{q})$. Since $f_{(0,\dots,0)}(\textbf{q})=\displaystyle\prod_{j=1}^k\frac{1}{1-q_k}$, and these operators commute, we have
\begin{center}
$f_{(i_1,\dots,i_k)}(\textbf{q})=\displaystyle\left(q_1\frac{\partial}{\partial q_1}\right)^{i_1}\dots\left(q_k\frac{\partial}{\partial q_k}\right)^{i_k}\prod_{j=1}^k\frac{1}{1-q_j}=\left(\frac{\partial}{\partial\textbf{q}}\right)^{\textbf{i}}\prod_{j=1}^k\frac{1}{1-q_j}$.
\end{center}
Therefore, (3.3) becomes
\begin{equation}
\displaystyle\sum_{|\textbf{i}|\leq d}b_{\textbf{i}}\left(\frac{\partial}{\partial\textbf{q}}\right)^{\textbf{i}}\prod_{j=1}^k\frac{1}{1-q_j}.
\end{equation}
Then, we have
\begin{center}
$\displaystyle\sum_{|\textbf{i}|\leq d}b_{\textbf{i}}\left(\frac{\partial}{\partial\textbf{q}}\right)^{\textbf{i}}=\prod_{\alpha\in\Phi^+}\left(1+c_{\lambda_1}(\alpha)\frac{\partial}{\partial q_1}+\dots+c_{\lambda_k}(\alpha)\frac{\partial}{\partial q_k}\right)$,
\end{center}
since this is just (3.2) with the substitution $a_i\mapsto\displaystyle\frac{\partial}{\partial q_i}$. The result follows.
\end{proof}

\section{Examples}

By setting $k=1$, we obtain the Hilbert series of an equivariant embedding of a projective variety, as in the case of \cite{MR2906911}. If $G$ has rank $k$, and we look at the formal power series given by the fundamental dominant weights $\langle\omega_1,\dots,\omega_k\rangle$, we obtain a generating function for the dimensions of the irreducible representations of $G$, as in \cite{MR1120029}.
\\
\indent Inside our given Borel subgroup $B\subset G$, we have a maximal unipotent subgroup $U$ such that $B=TU$. The quotient $G/U$ has a natural structure as an affine variety. This variety has coordinate ring
\begin{equation}
\mathbb{C}[G/U]\cong\displaystyle\bigoplus_{\lambda\in P_+(\mathfrak{g})}\mathbb{C}_{\lambda}\otimes L(\lambda),
\end{equation}
(see, for example, \S 3.3 in \cite{MR2401818}). If we set $V(\lambda):=\mathbb{C}_{\lambda}\otimes L(\lambda)$, we have a gradation on $\mathbb{C}[G/U]$ given by $V(\lambda)V(\mu)=V(\lambda+\mu)$. Then replacing $P_+(\mathfrak{g})$ with a lattice cone $\langle\lambda_1,\dots,\lambda_k\rangle$ gives a subalgebra of $\mathbb{C}[G/U]$. The spectrum of this subalgebra is a variety, and this variety has an $\mathbb{N}^k$-graded Hilbert series given by $HS_{\textbf{q}}\langle\lambda_1,\dots,\lambda_k\rangle$. 
\\
\indent Our main interest in examples is going to be using the formula from the main theorem to find a series in $k$ variables and then specializing that series to a Hilbert series on the underlying variety given by subalgebras of (4.1) corresponding to a given lattice cone $\langle\lambda_1,\dots,\lambda_k\rangle$.
\\
\indent Some interesting examples are those given by looking at the homogeneous coordinate ring of the three determinantal varieties $\mathcal{D}^{\leq k}_{m,n},\mathcal{SD}^{\leq k}_n$, and $\mathcal{AD}^{\leq 2k}_n$. We begin with the symmetric determinantal varieties $\mathcal{SD}^{\leq k}_n$. Note that finding the Hilbert series for these varieties is in general a very difficult thing to do (see, for example, \cite{MR2037715}, \cite{enright}).\\
\indent The Second Fundamental Theorem of Invariant Theory for $O(n)$ (see, for example, \cite{MR2522486}, p.561), states that the homogeneous coordinate ring $\mathcal{SD}^{\leq k}_n$ decomposes as an $SL(n,\mathbb{C})$-module in the following way:
\begin{center}
$\mathbb{C}[\mathcal{SD}^{\leq k}_n]\cong\displaystyle\bigoplus_{\lambda}L(\lambda)$,
\end{center}
where $\lambda$ runs over all even dominant integral weights of depth at most $k$. Here, an even weight of depth at most $k$ is one that lies in the lattice cone $\langle2\omega_1,\dots,2\omega_k\rangle$, where $\omega_1,\dots,\omega_{n-1}$ are the fundamental dominant weights of $SL(n,\mathbb{C})$, and we are using the standard Borel subgroup of upper triangular matrices in $SL(n,\mathbb{C})$.\\
\indent So we can compute the series $HS_{\textbf{q}}\langle2\omega_1,\dots,2\omega_k\rangle$ and specialize the variables in an appropriate way to recover the Hilbert series of the standard embedding of the symmetric determinantal variety. The standard Hilbert series on $\mathcal{SD}^{\leq k}_n$ is given by
\begin{center}
$\displaystyle\sum_{\lambda}dim(L(\lambda))q^{|\lambda|}$,
\end{center}
where again, $\lambda$ runs over all even dominant integral weights of depth at most $k$. After computing the series $HS_{\textbf{q}}\langle2\omega_1,\dots,2\omega_k\rangle$, we specialize to the standard Hilbert series by making the substitution $q_i\mapsto q^i$ for $i=1,\dots,k$.
\\
\indent We now compute some examples. We consider the variety $\mathcal{SD}^{\leq 2}_4$. Then we compute the series $HS_{\textbf{q}}\langle2\omega_1,2\omega_2\rangle$, where $\omega_1$ and $\omega_2$ are the first two fundamental dominant weights of $SL(4,\mathbb{C})$. The main theorem gives us the following closed form for $HS_{\textbf{q}}\langle2\omega_1,2\omega_2\rangle$:
\begin{center}
$\displaystyle\prod_{1\leq i<k\leq 4}\left(1+2c_{\omega_1}(\epsilon_i-\epsilon_j)q_1\frac{\partial}{\partial q_2}+2c_{\omega_2}(\epsilon_i-\epsilon_j)q_2\frac{\partial}{\partial q_2}\right)\frac{1}{(1-q_1)(1-q_2)}$,
\end{center}
where $\Phi^+=\{\epsilon_i-\epsilon_j\mid 1\leq i<j\leq 4\}$, and $\epsilon_i$ is the functional that gives the $i$th diagonal element of a matrix in $\mathfrak{g}=\mathfrak{sl}(4,\mathbb{C})$. Then computing $c_{\omega_1}(\epsilon_i-\epsilon_j)$ and $c_{\omega_2}(\epsilon_i-\epsilon_j)$ for $1\leq i<j\leq 4$ gives us
\begin{center}
$(1+2q_1\frac{\partial}{\partial q_1})(1+2q_2\frac{\partial}{\partial q_2})(1+q_1\frac{\partial}{\partial q_1}+q_2\frac{\partial}{\partial q_2})(1+q_2\frac{\partial}{\partial q_2})(1+\frac{2}{3}q_1\frac{\partial}{\partial q_1}+\frac{2}{3}q_2\frac{\partial}{\partial q_2})\frac{1}{(1-q_1)(1-q_2)}$.
\end{center}
Applying the differential operators then yields
\begin{center}
$\frac{1+6q_1+15q_2+q_1^2+16q_1q_2+15q_2^2+q_2^3-50q_1q_2^2-29q_1^2q_2-4q_1q_2^3 -25q_1^2q_2^2+6q_1^3q_2+21q_1^2q_2^3+20q_1^3q_2^2+6q_1^3q_2^3}{(1-q_1)^4(1-q_2)^5}$.
\end{center}
This formula seems unmanagable, but is easy to compute with Mathematica or Maple, and after we make the substitution $q_i\mapsto q^i$, we get
\begin{center}
$\displaystyle\frac{1+3q+6q^2}{(1-q)^7}$,
\end{center}
which is the Hilbert series for the standard embedding of $\mathcal{SD}^{\leq 2}_4$.
\\
\indent We can then increase the size of the matrices to get a recursive way of finding the Hilbert series of $\mathcal{SD}^{\leq 2}_n$. Let $\{\alpha_1,\dots,\alpha_{n-1}\}$ be the simple roots of $SL(n,\mathbb{C})$. The only positive roots of $SL(n,\mathbb{C})$ that contribute to the product in $HS_{\textbf{q}}\langle\omega_1,\omega_2\rangle$, are those which can be written as a string of simple roots $\sum\alpha_i$ beginning at either $\alpha_1$ or $\alpha_2$. So, as we go from $n-1$ to $n$, we add two differential operators, namely, those which correspond to the positive roots $\alpha_2+\dots+\alpha_{n-1}$ and $\alpha_1+\dots+\alpha_{n-1}$. If we define $HS_{\textbf{q}}^n\langle\omega_1,\omega_2\rangle$ to be the series given by the first two fundamental dominant weights of $SL(n,\mathbb{C})$, we have the following recursive formula.

\begin{lem}
For $n\geq 3$,
\begin{center}
$HS_{\textbf{q}}^n\langle2\omega_1,2\omega_2\rangle=(1+\frac{2}{n-2}q_2\frac{\partial}{\partial q_2})(1+\frac{2}{n-1}q_1\frac{\partial}{\partial q_1}+\frac{2}{n-1}q_2\frac{\partial}{\partial q_2})HS_{\textbf{q}}^{n-1}\langle2\omega_1,2\omega_2\rangle$.
\end{center}
\end{lem}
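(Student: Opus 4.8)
The plan is to read the recursion directly off the Main Theorem by isolating which factors of the product over $\Phi^+$ are genuinely new when one passes from $SL(n-1,\mathbb{C})$ to $SL(n,\mathbb{C})$. Realizing $\Phi^+=\{\epsilon_i-\epsilon_j:1\le i<j\le n\}$ and using $c_{2\omega_m}(\alpha)=2c_{\omega_m}(\alpha)$, the Main Theorem writes $HS_{\textbf{q}}^n\langle2\omega_1,2\omega_2\rangle$ as
\[
\prod_{1\le i<j\le n}\left(1+2c_{\omega_1}(\epsilon_i-\epsilon_j)q_1\frac{\partial}{\partial q_1}+2c_{\omega_2}(\epsilon_i-\epsilon_j)q_2\frac{\partial}{\partial q_2}\right)\frac{1}{(1-q_1)(1-q_2)}.
\]
The first observation I would record is that every factor in which both coefficients vanish is the identity operator and may be deleted from the product, so everything hinges on the two functions $c_{\omega_1}$ and $c_{\omega_2}$.

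Next I would compute these coefficients explicitly. Taking the standard inner product $(\epsilon_i,\epsilon_j)=\delta_{ij}$ (any positive multiple is harmless, since $c_\lambda(\alpha)$ is a ratio), one finds $(\rho,\epsilon_i-\epsilon_j)=j-i$, while $(\omega_1,\epsilon_i-\epsilon_j)=\delta_{1i}-\delta_{1j}$ and $(\omega_2,\epsilon_i-\epsilon_j)=(\delta_{1i}+\delta_{2i})-(\delta_{1j}+\delta_{2j})$. The crucial point here is that these values depend only on $i$ and $j$ and not on the ambient rank, because the corrections making $\omega_1,\omega_2$ traceless lie along $\epsilon_1+\dots+\epsilon_n$ and are orthogonal to every root. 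Hence $c_{\omega_1}(\epsilon_i-\epsilon_j)$ and $c_{\omega_2}(\epsilon_i-\epsilon_j)$ are independent of $n$, and one checks from the formulas above that both vanish precisely when $i\ge 3$. This shows that only the roots with $i\in\{1,2\}$ contribute, and that the factors attached to roots shared by $SL(n-1,\mathbb{C})$ and $SL(n,\mathbb{C})$ are literally the same operators in both products.

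With this in hand, identifying the recursion is immediate. Embedding $SL(n-1,\mathbb{C})$ as the upper-left block, its positive roots are the $\epsilon_i-\epsilon_j$ with $j\le n-1$, so the roots appearing for the first time at rank $n$ are exactly $\epsilon_i-\epsilon_n$ for $1\le i\le n-1$; of these only $i=1$ and $i=2$ are nontrivial. Evaluating the coefficients gives $c_{\omega_1}(\epsilon_1-\epsilon_n)=c_{\omega_2}(\epsilon_1-\epsilon_n)=\tfrac{1}{n-1}$ together with $c_{\omega_1}(\epsilon_2-\epsilon_n)=0$ and $c_{\omega_2}(\epsilon_2-\epsilon_n)=\tfrac{1}{n-2}$, so the two new factors are precisely $1+\tfrac{2}{n-1}q_1\frac{\partial}{\partial q_1}+\tfrac{2}{n-1}q_2\frac{\partial}{\partial q_2}$ and $1+\tfrac{2}{n-2}q_2\frac{\partial}{\partial q_2}$. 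Finally, since every factor is a polynomial in the two commuting Euler operators $q_1\frac{\partial}{\partial q_1}$ and $q_2\frac{\partial}{\partial q_2}$, all factors commute and the product may be reordered freely; pulling the two new factors to the front, and noting that the base $\frac{1}{(1-q_1)(1-q_2)}$ is common to both ranks, leaves exactly $HS_{\textbf{q}}^{n-1}\langle2\omega_1,2\omega_2\rangle$ behind, which is the claimed identity. I expect the only real obstacle to be the bookkeeping in the middle step, namely verifying the $n$-independence of the coefficients so that the old factors truly agree across ranks, since the commutativity and the final factoring are then automatic.
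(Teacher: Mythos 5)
Your proposal is correct and follows the same route as the paper: identify that only roots $\epsilon_i-\epsilon_j$ with $i\in\{1,2\}$ (i.e.\ strings of simple roots starting at $\alpha_1$ or $\alpha_2$) contribute non-identity factors, observe that passing from $SL(n-1,\mathbb{C})$ to $SL(n,\mathbb{C})$ adds exactly the two roots $\epsilon_1-\epsilon_n$ and $\epsilon_2-\epsilon_n$, and compute their coefficients. You are in fact more careful than the paper on one point it leaves implicit — that $c_{\omega_1}$ and $c_{\omega_2}$ on the shared roots are independent of $n$ because the trace corrections to $\omega_1,\omega_2$ are orthogonal to all roots — which is exactly the detail needed for the old factors to match across ranks.
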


We obtain the recursion by simply computing the coefficients for the two new weights. Note that this is a linear recursion on the multi-variate series, but it does not pass to a recursion on the single variable Hilbert series for the varieties $\mathcal{SD}^{\leq 2}_n$. In this way, the multi-variate series behaves more nicely than the single variable Hilbert series. This multi-variate series then allows us to more easily compute the Hilbert series of $\mathcal{SD}^{\leq k}_n$.\\
\indent We have a similar story for the Hilbert series of the standard embedding of $\mathcal{AD}^{\leq 2k}_n$. The Second Fundamental Theorem of Invariant Theory for $Sp(2n,\mathbb{C})$ (see, for example, p. 562 in \cite{MR2522486}) says that the homoegenous coordinate ring of $\mathcal{AD}^{\leq 2k}_n$ decomposes as an $SL(2n,\mathbb{C})$-module as
\begin{center}
$\mathbb{C}[\mathcal{AD}^{\leq 2k}_n]\cong\displaystyle\sum_{\lambda}L(\lambda)$,
\end{center}
where $\lambda$ runs over the lattice cone $\langle\omega_2,\omega_4,\dots,\omega_{2k}\rangle$. Then $HS_{\textbf{q}}\langle\omega_2,\omega_4,\dots,\omega_{2k}\rangle$ can again be specialized to the standard Hilbert series given by
\begin{center}
$\displaystyle\sum_{\lambda}dim(L(\lambda))q^{|\lambda|}$,
\end{center}
where $\lambda$ runs over $\langle\omega_2,\omega_4,\dots,\omega_{2k}\rangle$, by making the substitution $q_i\mapsto q^i$.
\\
\indent We finish with an example beyond the scope of the determinantal varieties. We consider the lattice cone $\langle3\omega_1,3\omega_2\rangle$ in the weight lattice $P_+(\mathfrak{sl}(3,\mathbb{C}))$. This series corresponds to the subalgebra
\begin{center}
$\displaystyle\bigoplus_{\lambda\in\langle3\omega_1,3\omega_2\rangle}\mathbb{C}_{\lambda}\otimes L(\lambda)$
\end{center}
of (4.1). If we take the torus $T\cong (\mathbb{C}^{\times})^2$ of $SL(3,\mathbb{C})$, then this algebra corresponds to a coordinate of the variety $G/AU$, where $A$ is the finite subgroup of $T$ generated by $\{(\zeta_3,1),(1,\zeta_3)\}$, where $\zeta_3$ is a primitive third root of unity.
\\
\indent From (1.2), we have the following closed form for $HS_{\textbf{q}}\langle3\omega_1,3\omega_2\rangle$:
\begin{center}
$\displaystyle\prod_{1\leq i<j\leq 3}\left(1+3c_{\omega_1}(\epsilon_i-\epsilon_j)q_1\frac{\partial}{\partial q_1}+3c_{\omega_2}(\epsilon_i-\epsilon_j)q_2\frac{\partial}{\partial q_2}\right)\frac{1}{(1-q_1)(1-q_2)}$,
\end{center}
where $\Phi^+=\{\epsilon_i-\epsilon_j\mid 1\leq i<j\leq 3\}$. Then computing the coefficients yields
\begin{center}
$\displaystyle(1+3q_1\frac{\partial}{\partial q_1})(1+3q_2\frac{\partial}{\partial q_2})(1+\frac{3}{2}q_1\frac{\partial}{\partial q_1}+\frac{3}{2}q_2\frac{\partial}{\partial q_2})\frac{1}{(1-q_1)(1-q_2)}$,
\end{center}
and computing the partial derivatives yields
\begin{center}
$\displaystyle\frac{1+7q_1+7q_2+q_1^2+q_2^2+13q_1q_2-11q_1^2q_2-11q_1q_2^2+8q_1^2q_2^2}{(1-q_1)^3(1-q_2)^3}$.
\end{center}
We can perform various substitutions to find nice formulas for Hilbert series of different embeddings of $G/AU$. For instance, after performing the substitution $q_i\mapsto q$, we get
\begin{center}
$\displaystyle\frac{1+14q+15q^2-22q^3+8q^4}{(1-q)^6}$.
\end{center}
We reduce the complexity in performing the calculation of the above Hilbert series by first computing the series in two variables, and then specializing it to a Hilbert series for $G/AU$. \\
\indent The series $HS_{\textbf{q}}\langle\lambda_1,\dots,\lambda_k\rangle$ is useful in computing these series, as it bypasses much of the complicated machinery normally used in their computation. The multi-variate series applies to any variety whose coordinate ring can be decomposed into highest weight representations whose weights run over a lattice cone in $P_+(\mathfrak{g})$. These include the three classes of determinantal varieties, but also include many other interesting examples, such as the flag variety $G/U$.

\begin{bibdiv}
\begin{biblist}

\bib{MR1669884}{book}{
   author={Bump, Daniel},
   title={Algebraic geometry},
   publisher={World Scientific Publishing Co. Inc.},
   place={River Edge, NJ},
   date={1998},
   pages={x+218},
   isbn={981-02-3561-5},
   review={\MR{1669884 (2000a:14001)}},
}

\bib{MR1120029}{article}{
   author={Cohen, Arjeh M.},
   author={Ruitenburg, G. C. M.},
   title={Generating functions and Lie groups},
   conference={
      title={},
      address={Amsterdam},
      date={1990},
   },
   book={
      series={CWI Tract},
      volume={84},
      publisher={Math. Centrum Centrum Wisk. Inform.},
      place={Amsterdam},
   },
   date={1991},
   pages={19--28},
   review={\MR{1120029 (92f:22022)}},
}

\bib{MR2037715}{article}{
   author={Enright, Thomas J.},
   author={Hunziker, Markus},
   title={Resolutions and Hilbert series of determinantal varieties and
   unitary highest weight modules},
   journal={J. Algebra},
   volume={273},
   date={2004},
   number={2},
   pages={608--639},
   issn={0021-8693},
   review={\MR{2037715 (2005h:17013)}},
   doi={10.1016/S0021-8693(03)00159-5},
}

\bib{enright}{article}{
   author={Enright, Thomas J.},
   author={Hunziker, Markus},
   author={Pruett, W. Andrew},
   title={Diagrams of Hermitian type, highest weight modules, and syzygies of determinantal      varieties},
   journal={to appear},
}

\bib{MR2522486}{book}{
   author={Goodman, Roe},
   author={Wallach, Nolan R.},
   title={Symmetry, representations, and invariants},
   series={Graduate Texts in Mathematics},
   volume={255},
   publisher={Springer},
   place={Dordrecht},
   date={2009},
   pages={xx+716},
   isbn={978-0-387-79851-6},
   review={\MR{2522486 (2011a:20119)}},
   doi={10.1007/978-0-387-79852-3},
}

\bib{MR2906911}{article}{
   author={Gross, Benedict H.},
   author={Wallach, Nolan R.},
   title={On the Hilbert polynomials and Hilbert series of homogeneous
   projective varieties},
   conference={
      title={Arithmetic geometry and automorphic forms},
   },
   book={
      series={Adv. Lect. Math. (ALM)},
      volume={19},
      publisher={Int. Press, Somerville, MA},
   },
   date={2011},
   pages={253--263},
   review={\MR{2906911}},
}

\bib{MR1416564}{book}{
   author={Harris, Joe},
   title={Algebraic geometry},
   series={Graduate Texts in Mathematics},
   volume={133},
   note={A first course;
   Corrected reprint of the 1992 original},
   publisher={Springer-Verlag},
   place={New York},
   date={1995},
   pages={xx+328},
   isbn={0-387-97716-3},
   review={\MR{1416564 (97e:14001)}},
}

\bib{MR2401818}{article}{
   author={Willenbring, Jeb F.},
   author={Zuckerman, Gregg J.},
   title={Small semisimple subalgebras of semisimple Lie algebras},
   conference={
      title={Harmonic analysis, group representations, automorphic forms and
      invariant theory},
   },
   book={
      series={Lect. Notes Ser. Inst. Math. Sci. Natl. Univ. Singap.},
      volume={12},
      publisher={World Sci. Publ., Hackensack, NJ},
   },
   date={2007},
   pages={403--429},
   review={\MR{2401818 (2009c:17009)}},
}

\end{biblist}
\end{bibdiv}

\end{document}